\newcommand\hcancel[2][black]{\setbox0=\hbox{$#2$}%
\rlap{\raisebox{.25\ht0}{\textcolor{#1}{\rule{0.7\wd0}{0.75pt}}}}#2} 
\newcommand\hcancelt[2][black]{\setbox0=\hbox{$#2$}%
\rlap{\raisebox{.25\ht0}{\textcolor{#1}{\hspace{0.3mm}\rule{0.7\wd0}{0.75pt}}}}#2} 
\newtheorem{thm}{Theorem}[section]
\newtheorem{cor}{Corollary}[section]
\theoremstyle{definition}
\numberwithin{algorithm}{section}
\numberwithin{equation}{section}
\renewcommand{\theequation}{\thesection.\arabic{equation}}
\def\simgt{\,\hbox{\lower0.6ex\hbox{$>$}\llap{\raise0.3ex\hbox{$\sim$}}}\,}
\def\simlt{\,\hbox{\lower0.6ex\hbox{$<$}\llap{\raise0.3ex\hbox{$\sim$}}}\,}
\def\simgteq{\,\hbox{\lower0.6ex\hbox{$\ge$}\llap{\raise0.6ex\hbox{$\sim$}}}\,}
\def\simlteq{\,\hbox{\lower0.6ex\hbox{$\le$}\llap{\raise0.6ex\hbox{$\sim$}}}\,}
\def\applteq{\,\hbox{\lower0.6ex\hbox{$\le$}\llap{\raise0.8ex\hbox{$\approx$}}}\,}
\def\applt{\,\hbox{\lower0.6ex\hbox{$<$}\llap{\raise0.5ex\hbox{$\approx$}}}\,}
\DeclareMathAlphabet\mathbfcal{OMS}{cmsy}{b}{n}
\def\user@resume{resume}
\def\user@intermezzo{intermezzo}
\newcounter{previousequation}
\newcounter{lastsubequation}
\newcounter{savedparentequation}
\newcommand{\C}[1]{\mathcal{#1}}
\newcommand{\F}[1]{\mathbf{#1}}
\newcommand{\bsC}[1]{\boldsymbol{\C{#1}}}
\newcommand{\MB}[1]{\mathbb{#1}}
\newcommand{\MBS}{\MB{S}}
\newcommand{\MBG}{\MB{G}}
\newcommand{\MBSG}{\hat{\MBG}}
\newcommand{\MBR}{\mathbb{R}}
\newcommand{\MBRP}{\MBR^+}
\newcommand{\MBRzer}{\MBR_0}
\newcommand{\MBRzerP}{\MBRzer^+}
\newcommand{\MBZ}{\mathbb{Z}}
\newcommand{\MBZP}{\MBZ^+}
\newcommand{\MBZzer}{\MBZ_0}
\newcommand{\MBZzerP}{\MBZzer^+}
\newcommand{\MBZe}{\MBZ_e}
\newcommand{\MBZeP}{\MBZe^+}
\newcommand{\MBZzereP}{\MBZ_{0,e}^+}
\newcommand{\MBZOP}{\MBZ_{o}^+}
\newcommand{\MBT}{\mathbb{T}}
\newcommand{\MBJ}{\mathbb{J}}
\newcommand{\MBN}{\mathbb{N}}
\newcommand{\MFF}{\mathfrak{F}}
\newcommand{\MFP}{\mathfrak{P}}
\newcommand{\MFC}{\mathfrak{C}}
\newcommand{\MFX}{\mathfrak{X}}
\newcommand{\MFU}{\mathfrak{U}}
\newcommand{\cancbra}[1]{\hcancel{[}#1\hcancelt{]}}
\newcommand{\sumd}{\sideset{}{'}}
\newcommand{\bmc}{\bm{c}}
\newcommand{\bmx}{\bm{x}}
\newcommand{\bmt}{\bm{t}}
\newcommand{\bmz}{\bm{z}}
\newcommand{\bmy}{\bm{y}}
\newcommand{\bmh}{\bm{h}}
\newcommand{\bmu}{\bm{u}}
\newcommand{\bmf}{\bm{f}}
\newcommand{\bmzer}{\bm{\mathit{0}}}
\newcommand{\bmone}{\bm{\mathit{1}}}
\newcommand{\bmX}{\bm{X}}
\newcommand{\hbmz}{\hat\bmz}
\newcommand{\hz}{\hat{z}}
\newcommand{\FOmega}{\F{\Omega}}
\newcommand{\foralla}{\,\forall_{\mkern-6mu a}\,}
\newcommand{\foralle}{\,\forall_{\mkern-6mu e}\,}
\newcommand{\foralls}{\,\forall_{\mkern-6mu s}\,}
\newcommand{\Def}[1]{\text{Def}\left(#1\right)}
\newcommand{\GLD}[4]{{}_{\;\;#1}^{GL}D_{#2}^{#3}{#4}}
\newcommand{\RLD}[4]{{}_{\;\;#1}^{RL}D_{#2}^{#3}{#4}}
\newcommand{\CD}[4]{{}_{#1}^CD_{#2}^{#3}{#4}}
\newcommand{\MRLD}[4]{{}_{\hspace{3.6mm}#1}^{MRL}D_{#2}^{#3}{#4}}
\newcommand{\MCD}[4]{{}_{\;\;#1}^{MC}D_{#2}^{#3}{#4}}
\newcommand{\MD}[4]{{}_{#1}^{M}D_{#2}^{#3}{#4}}
\newcommand{\ED}[4]{{}_{#1}^{E}D_{#2}^{#3}{#4}}
\def\BState{\State\hskip-\ALG@thistlm}
    \newcommand*{\algrule}[1][\algorithmicindent]{\makebox[#1][l]{\hspace*{.5em}\thealgruleextra\vrule height \thealgruleheight depth \thealgruledepth}}%
\newcommand*{\thealgruleextra}{}
\newcommand*{\thealgruleheight}{.75\baselineskip}
\newcommand*{\thealgruledepth}{.25\baselineskip}
\def\ALG@printindent{%
    \ifnum \theALG@nested>0
        \ifx\ALG@text\ALG@x@notext
        \else
            \unskip
            \addvspace{-1pt}
            \ALG@printindent@tempcnta=1
            \loop
                \algrule[\csname ALG@ind@\the\ALG@printindent@tempcnta\endcsname]%
                \advance \ALG@printindent@tempcnta 1
            \ifnum \ALG@printindent@tempcnta<\numexpr\theALG@nested+1\relax
            \repeat
        \fi
    \fi
    }%
\patchcmd{\ALG@doentity}{\noindent\hskip\ALG@tlm}{\ALG@printindent}{}{\errmessage{failed to patch}}
\newbox\statebox
\newcommand{\myState}[1]{%
    \setbox\statebox=\vbox{#1}%
    \edef\thealgruleheight{\dimexpr \the\ht\statebox+1pt\relax}%
    \edef\thealgruledepth{\dimexpr \the\dp\statebox+1pt\relax}%
    \ifdim\thealgruleheight<.75\baselineskip
        \def\thealgruleheight{\dimexpr .75\baselineskip+1pt\relax}%
    \fi
    \ifdim\thealgruledepth<.25\baselineskip
        \def\thealgruledepth{\dimexpr .25\baselineskip+1pt\relax}%
    \fi
    \State #1%
    \def\thealgruleheight{\dimexpr .75\baselineskip+1pt\relax}%
    \def\thealgruledepth{\dimexpr .25\baselineskip+1pt\relax}%
}
\newcommand{\oset}[3][0ex]{%
  \mathrel{\mathop{#3}\limits^{
    \vbox to#1{\kern-2\ex@
    \hbox{$\scriptstyle#2$}\vss}}}}
\begin{document}
\begin{frontmatter}
\title{Fourier-Gegenbauer Pseudospectral Method for Solving Periodic Higher-Order Fractional Optimal Control Problems}
\author[Assiut]{Kareem T. Elgindy\corref{cor1}}
\ead{kareem.elgindy@gmail.com}
\address[Assiut]{Mathematics Department, Faculty of Science, Assiut University, Assiut 71516, Egypt}
\cortext[cor1]{Corresponding author}
\begin{abstract}
In \cite{elgindy2023fourier}, we inaugurated a new area of optimal control (OC) theory that we called ``periodic fractional OC theory,'' which was developed to find optimal ways to periodically control a fractional dynamic system. The typical mathematical formulation in this area includes the class of periodic fractional OC problems (PFOCPs), which can be accurately solved numerically for a fractional order $\alpha$ in the range $0 < \alpha < 1$ using Fourier collocation at equally spaced nodes and Fourier and Gegenbauer quadratures. In this study, we extend this earlier work to cover periodic higher-order fractional OC problems (PHFOCPs) of any positive non-integer fractional order $\alpha$.
\end{abstract}
\begin{keyword}
Fourier collocation; Fractional optimal control; Gegenbauer quadrature; Periodic fractional derivative; Pseudospectral method.
\end{keyword}

\end{frontmatter}

\section{Introduction}
\label{Int}
The main goal of the class of periodic fractional optimal control problems (PFOCPs) that we first introduced in \cite{elgindy2023fourier} is to find periodic solutions to optimal control (OC) problems governed by fractional dynamic systems. This class of problems is expected to be very effective in modeling periodic phenomena and real-life problems with greater accuracy than periodic integer-order OC problems. We showed in \cite{elgindy2023fourier} how to derive the optimal states and controls numerically with excellent accuracy and rapid convergence through a stable scheme based on Fourier collocation and Gegenbauer quadrature. For a fractional order $\alpha \in (0, 1)$, we pointed out how to reduce the
integral defining the fractional derivative (FD) to an easy-to-evaluate one that includes only the first derivative of the trigonometric Lagrange interpolating polynomial associated with the Fourier interpolation of the periodic function. In this study, we extend this framework by introducing another magical change of variables formula that can reduce the integral defining the FD of any positive non-integer fractional order to an easy-to-evaluate one that includes only the $m$th-derivative of the trigonometric Lagrange interpolating polynomial, where $m$ is the ceiling value of $\alpha$. We show how to generalize the Fourier-Gegenbauer (FG)-based pseudospectral (PS) FGPS method first presented in \cite{elgindy2023fourier} to manage periodic higher-order fractional OC problems (PHFOCPs). We also show how to extend the notions of the $\alpha$th-order FGPS quadrature (FGPSQ) and its associated integration matrix (FGPSFIM) with index $L$, which proved to be very effective in computing periodic FDs, to exist for any positive non-integer fractional order $\alpha$. Another important contribution of this study is the derivation of more general and compact theorems on the truncation error bounds associated with the FGPS quadrature induced by the FGPSFIM, which highlights the quality of FGPS approximations to periodic FDs in light of some of the main parameters associated with the periodic FD and the proposed FGPS method. To the best of our knowledge, this paper introduces the first numerical approach in the literature to solve PHFOCPs of any positive non-integer fractional order.

The remainder of this study is structured as follows. In Section \ref{sec:PN}, we provide preliminaries and notations to simplify the presentation of this paper. The PHFOCP is introduced in general form in Section \ref{sec:PS1}. In Section \ref{sec:FGPMFD1}, we derive the extended FGPS formulas required to approximate the periodic FDs for any positive non-integer fractional order. Section \ref{sec:ECA1} presents error and convergence analyses of the derived FGPS formulas. The performance of the extended FGPS method is demonstrated in Section \ref{sec:NS1} followed by concluding remarks in Section \ref{sec:Conc}.

\section{Preliminaries and Notations}
\label{sec:PN}
The following notations are used throughout this study to abridge and simplify the mathematical formulas. 

\noindent\textbf{Logical Symbols.} The  symbols $\forall, \foralla, \foralle$, and $\foralls$ stand for the phrases ``for all,'' ``for any,'' ``for each,'' and ``for some,'' in respective order. $f \in \Def{\FOmega}$ means the function $f$ is defined on the set $\FOmega$.\\[0.5em]
\textbf{List and Set Notations.} $\MFC, \MFF$, and $\MFP$ denote the set of all complex-valued, real-valued, and piecewise continuous real-valued functions, respectively. Moreover, $\MBR$, $\MBZ, \MBZP, \MBZzerP, \MBZOP$, $\MBZeP$, and $\MBZzereP$ denote the sets of real numbers, integers, positive integers, non-negative integers, positive odd integers, positive even integers, and non-negative even integers, respectively. The notations $i$:$j$:$k$ or $i(j)k$ indicate a list of numbers from $i$ to $k$ with increment $j$ between numbers, unless the increment equals one where we use the simplified notation $i$:$k$. For example, $0$:$0.5$:$2$ simply means the list of numbers $0, 0.5, 1, 1.5$, and $2$, while $0$:$2$ means $0, 1$, and $2$. The list of symbols $y_1, y_2, \ldots, y_n$ is denoted by $\left. y_i \right|_{i=1:n}$ or simply $y_{1:n}$, and their set is represented by $\{y_{1:n}\}\,\foralla n \in \MBZP$. We define $\MBJ_n = \{0:n-1\}$ and $\MBN_n = \{1:n\}\,\foralla n \in \MBZP$. $\MBS_n^{T} = \left\{t_{n,0:n-1}\right\}$ is the set of $n$ equally-spaced points such that $t_{n,j} = T j/n\, \forall j \in \MBJ_n$. $\MBG_n^{\lambda} = \left\{z_{n,0:n}^{\lambda}\right\}$ is the set of Gegenbauer-Gauss (GG) zeros of the $(n+1)$st-degree Gegenbauer polynomial with index $\lambda > -1/2$, and $\MBSG_{1,n}^{\lambda} = \left\{\hz_{n,0:n}^{\lambda}: \hz_{n,0:n}^{\lambda} = \frac{1}{2} \left(z_{n,0:n}^{\lambda}+1\right)\right\}$ is the shifted Gegenbauer (SG)-Gauss (or SGG) points set in the interval $[0,1]\,\foralla n \in \MBZP$; cf. \cite{Elgindy201382,elgindy2018high,elgindy2018optimal}. Finally, the specific interval $[0, T]$ is denoted by $\FOmega_{T}\,\forall T > 0$; for example, $[0, t_{n,j}]$ is denoted by ${\FOmega_{t_{n,j}}}\,\forall j \in \MBJ_n$.\\[0.5em] 
\textbf{Function Notations.} $\delta_{n,m}$ is the usual Kronecker delta function of variables $n$ and $m$. $\left\lceil  \cdot  \right\rceil, \left\lfloor {\cdot} \right\rfloor$ and $\Gamma$ denote the ceil, floor, and Gamma functions, respectively. $\left( {\begin{array}{*{20}{c}}
\alpha \\
k
\end{array}} \right)$ is the binomial coefficient indexed by the pair $\alpha \in \MBR$ and $k \in \MBZzerP$. For convenience, we shall denote $g(t_{n})$ by $g_n \foralla g \in \MFC, n \in \MBZ, t_n \in \MBR$, unless stated otherwise.\\[0.5em]
\textbf{Integral Notations.} We denote $\int_0^{b} {h(t)\,dt}$ and $\int_a^{b} {h(t)\,dt}$ by $\C{I}_{b}^{(t)}h$ and $\C{I}_{a, b}^{(t)}h$, respectively, $\foralla$ integrable $h \in \MFC, \{a, b\} \subset \MBR$. If the integrand function $h$ is to be evaluated at any other expression of $t$, say $u(t)$, we express $\int_0^{b} {h(u(t))\,dt}$ and $\int_a^b {h(u(t))\,dt}$ with a stroke through the square brackets as $\C{I}_{b}^{(t)}h\cancbra{u(t)}$ and $\C{I}_{a,b}^{(t)}h\cancbra{u(t)}$ in respective order.\\[0.5em] 
\textbf{Space and Norm Notations.} $\MBT_{T}$ is the space of $T$-periodic, univariate functions $\foralla T \in \MBRP$. $C^k(\FOmega)$ is the space of $k$ times continuously differentiable functions on ${\FOmega}\,\forall k \in \MBZzerP$. ${}_T\mathfrak{X}_{n_1}^{n_2} = \{[y_0, \ldots, y_{n_1}]^{\top}: \MBRzerP \to \MBR^{n_1}\text{ s.t. }y_j \in \MBT_{T} \cap C^{n_2}(\MBRzerP) \forall j \in \MBN_{n_1}\}$ is the space of $T$-periodic, $n_2$-times continuously differentiable, $n_1$-dimensional vector functions on $\MBRzerP$. ${}_T\MFU_{n} = \{[u_0, \ldots, u_{n}]^{\top}: \MBRzerP \to \MBR^{n}\text{ s.t. }u_j \in \MBT_{T} \cap \MFP\,\forall j$ $\in \MBN_{n}\}$ is the space of $T$-periodic, $n$-dimensional piecewise continuous vector functions on $\MBRzerP$. $L^p({\FOmega})$ is the Banach space of measurable functions $u$ defined on ${\FOmega}$ such that ${\left\| u \right\|_{{L^p}}} = {\left( {{\C{I}_{\FOmega}}{{\left| u \right|}^p}} \right)^{1/p}} < \infty\,\forall p \ge 1$. In particular, we write $\left\|u\right\|_{\infty}$ to denote ${\left\| u \right\|_{{L^{\infty}}}}$. Finally, $\left\|\cdot\right\|_2$ denotes the usual Euclidean norm of vectors.\\[0.5em]
\textbf{Vector Notations.} We shall use the shorthand notations $\bmt_N$ and $g_{0:N-1}$ to stand for the column vectors $[t_{0}, t_{1}, \ldots$, $t_{N-1}]^{\top}$ and $[g_0, g_1, \ldots, g_{N-1}]^{\top}\,\forall N \in \MBZP$ in respective order. In general, $\foralla h \in \MFC$ and vector $\bmy$ whose $i$th-element is $y_i \in \MBR$, the notation $h(\bmy)$ stands for a vector of the same size and structure of $\bmy$ such that $h(y_i)$ is the $i$th element of $h(\bmy)$. Moreover, by $\bmh(\bmy)$ or $h_{1:m}\cancbra{\bmy}$ with a stroke through the square brackets, we mean $[h_1(\bmy), \ldots, h_m(\bmy)]^{\top}\,\foralla m$-dimensional column vector function $\bmh$, with the realization that the definition of each array $h_i(\bmy)$ follows the former notation rule $\foralle i$. Furthermore, if $\bmy$ is a vector function, say $\bmy = \bmy(t)$, then we write $h(\bmy(\bmt_N))$ and $\bmh(\bmy(\bmt_N))$ to denote $[h(\bmy(t_0)), h(\bmy(t_1)), \ldots, h(\bmy(t_{N-1}))]^{\top}$ and $[\bmh(\bmy(t_0)), \bmh(\bmy(t_1)), \ldots, \bmh(\bmy(t_{N-1}))]^{\top}$ in respective order.\\[0.5em] 
\textbf{Matrix Notations.} $\F{O}_n, \F{1}_n$, and $\F{I}_n$ stand for the zero, all ones, and the identity matrices of size $n$. $\F{C}_{n,m}$ indicates that $\F{C}$ is a rectangular matrix of size $n \times m$; moreover, $\F{C}_n$ denotes a row vector whose elements are the $n$th-row elements of $\F{C}$, except when $\F{C}_n = \F{O}_n, \F{1}_n$, or $\F{I}_n$, where it denotes the size of the matrix. For convenience, a vector is represented in print by a bold italicized symbol while a two-dimensional matrix is represented by a bold symbol, except for a row vector whose elements form a certain row of a matrix where we represent it in bold symbol as stated earlier. For example, $\bmone_n$ and $\bmzer_n$ denote the $n$-dimensional all ones- and zeros- column vectors, while $\F{1}_n$ and $\F{O}_n$ denote the all ones- and zeros- matrices of size $n$, respectively. Finally, the notation $[.;.]$ denotes the usual vertical concatenation.\\[0.5em]
\textbf{Common Fractional Differentiation Formulas.} Let $\alpha \in \MBRP$, $m = \left\lceil  \alpha  \right\rceil$, and $f \in \Def{\FOmega_T}$. The $\alpha$-th order Gr\"{u}nwald-Letnikov derivative of $f$ with respect to $t$ and a terminal value $a$ is given by
\begin{equation}
\GLD{a}{t}{\alpha}{f(t)} = \mathop {\lim }\limits_{\scriptstyle h \to 0\atop
\scriptstyle nh = t - a} {h^{ - \alpha }}\sum\limits_{k = 0}^n {{{( - 1)}^k}\left( {\begin{array}{*{20}{c}}
\alpha \\
k
\end{array}} \right)f(t - kh)}.
\end{equation}
The $\alpha$-th order left RL and Caputo FDs are denoted by $\RLD{0}{t}{\alpha}{f(t)}$ and $\CD{0}{t}{\alpha}{f(t)}$, respectively, and are defined for $t \in \FOmega_T$ by
\begin{align}
\RLD{0}{t}{\alpha}{f(t)} &= \left\{ \begin{array}{l}
\frac{1}{{\Gamma (m - \alpha )}}\frac{{{d^m}}}{{d{t^m}}}\C{I}_{t}^{(\tau)}\left[{{{(t - \tau )}^{m - \alpha  - 1}}f}\right],\quad \alpha \notin \MBZP,\\
f^{(m)}(t),\quad \alpha \in \MBZP,
\end{array} \right.\\
\CD{0}{t}{\alpha}{f(t)} &= \left\{ \begin{array}{l}
\frac{1}{{\Gamma (m - \alpha )}} \C{I}_t^{(\tau)} \left[{{{(t - \tau )}^{m - \alpha  - 1}}f^{(m)}}\right],\quad \alpha \notin \MBZP,\\
f^{(m)}(t),\quad \alpha \in \MBZP.
\end{array} \right.
\end{align}
The RL and Caputo FDs with sliding fixed memory length $L > 0$, and denoted by $\MRLD{L}{t}{\alpha}{f(t)}$ and $\MCD{L}{t}{\alpha}{f(t)}$, respectively, are defined by
\begin{align}
\MRLD{L}{t}{\alpha}{f(t)} = \left\{ \begin{array}{l}
\frac{1}{{\Gamma (m - \alpha )}}\frac{{{d^m}}}{{d{t^m}}}\C{I}_{t-L, t}^{(\tau)}\left[{{{(t - \tau )}^{m - \alpha  - 1}}f}\right],\quad \alpha \notin \MBZP,\\
f^{(m)}(t),\quad \alpha \in \MBZP,
\end{array} \right.\\
\MCD{L}{t}{\alpha}{f(t)} = \left\{ \begin{array}{l}
\frac{1}{{\Gamma (m - \alpha )}} \C{I}_{t-L, t}^{(\tau)}\left[{{{(t - \tau )}^{m - \alpha  - 1}}f^{(m)}}\right],\quad \alpha \notin \MBZP,\label{eq:PMFCD1}\\
f^{(m)}(t),\quad \alpha \in \MBZP.
\end{array} \right.
\end{align}
If $f \in C^{(m)}(\MBRzerP)$, then $\MRLD{L}{t}{\alpha}{f(t)} = \MCD{L}{t}{\alpha}{f(t)}$, so we can denote both modified fractional operators by $\MD{L}{t}{\alpha}{}$. A reduced form of $\MD{L}{t}{\alpha}{}$ with constant integration limits, denoted by $\ED{L}{t}{\alpha}{f(t)}$, is given by
\begin{equation}\label{eq:RedElgPMFCD1}
\ED{L}{t}{\alpha}{f(t)} = \frac{L^{m-\alpha}}{(1-\alpha) \Gamma(m-\alpha)} \C{I}_1^{(y)} {\left[y^{\frac{m-1}{1-\alpha}} f^{(m)}\cancbra{t-L\,y^{\frac{1}{1-\alpha}}}\right]},
\end{equation}
which simplifies into
\begin{equation}\label{eq:RedElgPMFCD12}
\ED{L}{t}{\alpha}{f(t)} = \frac{L^{1-\alpha}}{\Gamma(2-\alpha)} \C{I}_1^{(y)} {f'\cancbra{t-L\,y^{\frac{1}{1-\alpha}}}}\quad \foralla \alpha \in (0,1),
\end{equation}
cf. \cite{elgindy2023fourier}.

\section{Problem Statement}
\label{sec:PS1}
In this section, we consider the general form of PHFOCPs governed by $\MD{L}{t}{\alpha}{}$-based FDEs of any order $\alpha \in \MBRP\backslash\MBZP$. In particular, our aim is to approximate the optimal $T$-periodic solutions of the following PHFOCP:
\begin{mini}
  {\bmu}{J(\bmu) = \frac{1}{T} \C{I}_{T}^{(t)} {g}\cancbra{\bmx(t), \bmu(t), t}}{}{}
  {\label{eq:OC1}}{}
  \addConstraint{\MD{L}{t}{\alpha}{\bmx(t)}}{= \bmf\left(\bmx(t),\bmu(t),t\right)}{\quad \forall t \in {\FOmega_T},}
  \addConstraint{\bmc(\bmx(t)}{, \bmu(t), t) \le \bmzer_p}{\quad \forall t \in {\FOmega_T},}
\end{mini}
where $p \in \MBZP, \bmx \in {}_T\MFX_{n_x}^m, \bmu \in {}_T\MFU_{n_u}, g: \MBR^{n_x} \times \MBR^{n_u} \times \MBRzerP \to \MBR, \bmf = [f_1, \ldots, f_n]^{\top}: \MBR^{n_x} \times \MBR^{n_u} \times \MBRzerP \to \MBR^{n_x}$, and $\bmc = [c_1, \ldots, c_p]^{\top}: \MBR^{n_x} \times \MBR^{n_u} \times \MBRzerP \to \MBR^p$ with $\{g, f_i, c_i\} \subset C^{k}(\MBRzerP)\,\foralls k \in \MBZzerP$. Here, $\bmx$ and $\bmu$ are the state and control vector functions, respectively. We assume that a solution of the problem exists.

\section{The FGPS Approximation of \texorpdfstring{$\MD{L}{t}{\alpha}{}$}{The FGPS Approximation of the RL and Caputo FDs With Sliding Fixed Memory Length}}
\label{sec:FGPMFD1}
To generalize the recent work of \citet{elgindy2023fourier}, we introduce the following $m$-dependent change of variables equation:
\begin{equation}\label{eq:Elg1}
\tau = t-L\,y^{\frac{1}{m-\alpha}},
\end{equation}
to transform $\MD{L}{t}{\alpha}{f(t)}$ defined by Eq. \eqref{eq:PMFCD1} into a reduced form, denoted by $\ED{L}{t}{\alpha}{f(t)}$, and defined by
\begin{equation}\label{eq:RedElgPMFCD1}
\ED{L}{t}{\alpha}{f(t)} = \frac{L^{m-\alpha}}{\Gamma(m-\alpha+1)} \C{I}_1^{(y)} {f^{(m)}\cancbra{t-L\,y^{\frac{1}{m-\alpha}}}},
\end{equation}
$\forall t \in \FOmega_T, \alpha \in \MBRP\backslash\MBZP$. Notice that for $0 < \alpha < 1$, Eq. \eqref{eq:RedElgPMFCD1} simplifies into \eqref{eq:RedElgPMFCD12}. Let 
\begin{equation}\label{eq:eqLF1}
{I_N}f(t) = \sum\limits_{j = 0}^{N - 1} {{f_j}{\C{F}_j}(t)},
\end{equation}
be the $N/2$-degree, $T$-periodic Fourier interpolant that matches a function $f \in \MBT_T$ at the set of nodes $\MBS_N^T$, where ${\C{F}_j}(t)$ is the $N/2$-degree trigonometric Lagrange interpolating polynomial given by
\begin{equation}\label{eq:CFL2}
\C{F}_j(t) = \frac{1}{N}\sumd\sum\limits_{\left| k \right| \le N/2} {\cos\left({\omega _k}(t - {t_{N,j}})\right)}\quad \foralla f \in \MFF,
\end{equation}
where $\omega_a = 2 \pi a/T\, \forall a \in \MBR$, and the primed sigma denotes a summation in which the last term is omitted \cite{elgindy2019high}. Substituting Eq. \eqref{eq:CFL2} into Eq. \eqref{eq:eqLF1} yields the following approximation:
\begin{equation}\label{eq:RedElgPMFCD101app1}
\ED{L}{t}{\alpha}{f(t)} \approx \ED{L}{t}{\alpha}{I_Nf(t)} = \frac{L^{m-\alpha}}{\Gamma(m-\alpha+1)} \sum\limits_{j = 0}^{N - 1} {{f_j} \C{I}_1^{(y)}{{\C{F}^{(m)}_j}\cancbra{t-L\,y^{\frac{1}{m-\alpha}}}}},
\end{equation}
where the $m$th-derivative of ${\C{F}_j}$ is given by\\
\scalebox{0.9}{\parbox{\linewidth}{%
\begin{equation}\label{eq:CFL2nth1}
\C{F}_j^{(m)}(t) = (-1)^{\left\lfloor {\frac{{m + 1}}{2}} \right\rfloor} \frac{1}{N} \left(\frac{2\pi}{T}\right)^{m} \sumd\sum\limits_{\scriptstyle\left| k \right| \le N/2\atop
\scriptstyle k \ne 0} {k^{m} \sin\left({\omega _k}(t - {t_{N,j}}) + {\delta _{\frac{{m}}{2},\left\lfloor {\frac{{m}}{2}} \right\rfloor }}\frac{\pi }{2}\right)};
\end{equation}  
}}\\
cf. \cite{elgindy2023fourier}. We can approximate $\C{I}_1^{(y)}{{\C{F}^{(m)}_j}\cancbra{t-L\,y^{\frac{1}{m-\alpha}}}}$ at any mesh point $t_{N,l} \in \MBS_{N}^T$ using the SG quadratures; cf. \cite{Elgindy20161,Elgindy20171,elgindy2018optimal,Elgindy2023a}, to obtain the following $\alpha$th-order FGPS quadrature (FGPSQ) with index $L$:
\begin{gather}
\C{I}_1^{(y)}{{\C{F}^{(m)}_j}\cancbra{t_{N,l}-L\,y^{\frac{1}{m-\alpha}}}} \approx {}_{L}^E\C{Q}^{\alpha}_{N_G,l,j}\nonumber\\
= \frac{1}{2} \left[\F{P}\,{\C{F}^{(m)}_j}\left(t_{N,l}\,\bmone_{N_G+1}-L\,\left(\hbmz_{N_G+1}^{\lambda}\right)^{\frac{1}{m-\alpha}}\right)\right],\label{eq:Quaderr1}
\end{gather}
where $\F{P}$ is the $(N_G+1)$ dimensional, GG points-based integration row vector constructed using \cite[Algorithm 6 or 7]{Elgindy20171}. Substituting Eq. \eqref{eq:Quaderr1} into \eqref{eq:RedElgPMFCD101app1} yields the following general approximation formula for the FD of the $T$-periodic function $f$ at any mesh point:
\begin{equation}\label{eq:RedElgPMFCD101app2}
\ED{L}{t_{N,l}}{\alpha}{f(t)} \approx \frac{L^{m-\alpha}}{\Gamma(m-\alpha+1)} \sum\limits_{j = 0}^{N - 1} {{}_{L}^E\C{Q}^{\alpha}_{N_G,l,j} {f_j}}\quad \forall l \in \MBJ_{N},
\end{equation}
or in matrix notation,
\begin{equation}\label{eq:RedElgPMFCD101app3}
\ED{L}{\bmt_{N}}{\alpha}{f(t)} \approx \frac{L^{m-\alpha}}{\Gamma(m-\alpha+1)} \left({}_{L}^E\bsC{Q}_{N_G}^{\alpha}\,f_{0:N-1}\right),
\end{equation}  
where $\ED{L}{\bmt_{N}}{\alpha}{f(t)} = \left[\ED{L}{t_{N,0}}{\alpha}{f(t)}, \ldots, \ED{L}{t_{N,N-1}}{\alpha}{f(t)}\right]^{\top}$, and the $\alpha$th-order FGPS integration matrix (FGPSFIM) with index $L, {}_{L}^E\bsC{Q}^{\alpha}_{N_G}$, is given by 
\begin{align*}
&{}_{L}^E\bsC{Q}^{\alpha}_{N_G} = \left[{}_{L}^E\bsC{Q}^{\alpha}_{N_G,0}; \ldots; {}_{L}^E\bsC{Q}^{\alpha}_{N_G,N-1}\right]:\\
&{}_{L}^E\bsC{Q}^{\alpha}_{N_G,l} = \left[{}_{L}^E\C{Q}^{\alpha}_{N_G,l,0}, \ldots, {}_{L}^E\C{Q}^{\alpha}_{N_G,l,N-1}\right]\quad \forall l \in \MBJ_N.
\end{align*}
Figures \ref{fig:1}--\ref{fig:4} show the excellent approximations of the derived general FGPS formulas \eqref{eq:RedElgPMFCD101app2} and \eqref{eq:RedElgPMFCD101app3} to $\ED{L}{t}{\alpha}{\sin(t)}$ for the range $\alpha =$ 1.1:0.2:1.9, 1.99 using the parameter values $N \in \{4, 12, 40, 100\}$, $L = 30,  N_G = 1000$, and $\lambda = 0$. We observe that the graph of $\ED{L}{t}{\alpha}{\sin(t)}$ converges to the graph of the negative sine function as $\alpha \to 2$. 

It is interesting to find that the FGPS introduced in \cite{elgindy2023fourier} still applies for solving Problem \eqref{eq:OC1} with the replacement of the numerical differentiation formula of the state variables
\begin{equation}\label{eq:NDF1}
\ED{L}{\bmt_N}{\alpha}x_j(t) \approx \frac{L^{1-\alpha}}{\Gamma(2-\alpha)} \left({}_{L}^E\bsC{Q}_{N_G}^{\alpha}\,x_j(\bmt_N)\right)\quad \forall j \in \MBN_{n_x},
\end{equation}
with
\begin{equation}\label{eq:NDF1}
\ED{L}{\bmt_N}{\alpha}x_j(t) \approx \frac{L^{m-\alpha}}{\Gamma(m-\alpha+1)} \left({}_{L}^E\bsC{Q}_{N_G}^{\alpha}\,x_j(\bmt_N)\right)\quad \forall j \in \MBN_{n_x}.
\end{equation}

\begin{figure}[t]
\centering
\includegraphics[scale=0.5]{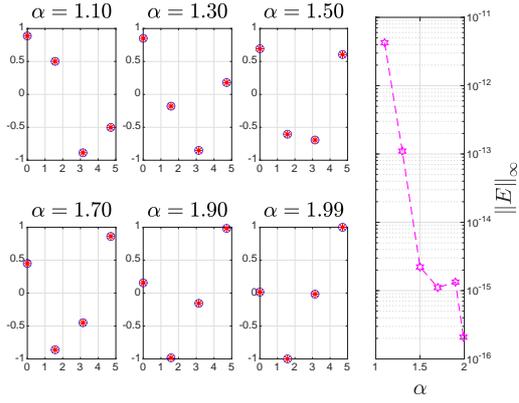}
\caption{The exact and FGPS approximate values of $\ED{L}{t}{\alpha}{\sin(t)}$ (Columns 1--3) and their corresponding maximum absolute errors (the 4th Column) for $\alpha = 1.1:0.2:1.9, 1.99$. The FGPS approximations were obtained using $N = 4, L = 30,  N_G = 1000$, and $\lambda = 0$. The exact values are shown in red color with * marker symbol, while the FGPS approximations are shown in blue colors with o marker symbol.}
\label{fig:1}
\end{figure}

\begin{figure}[t]
\centering
\includegraphics[scale=0.5]{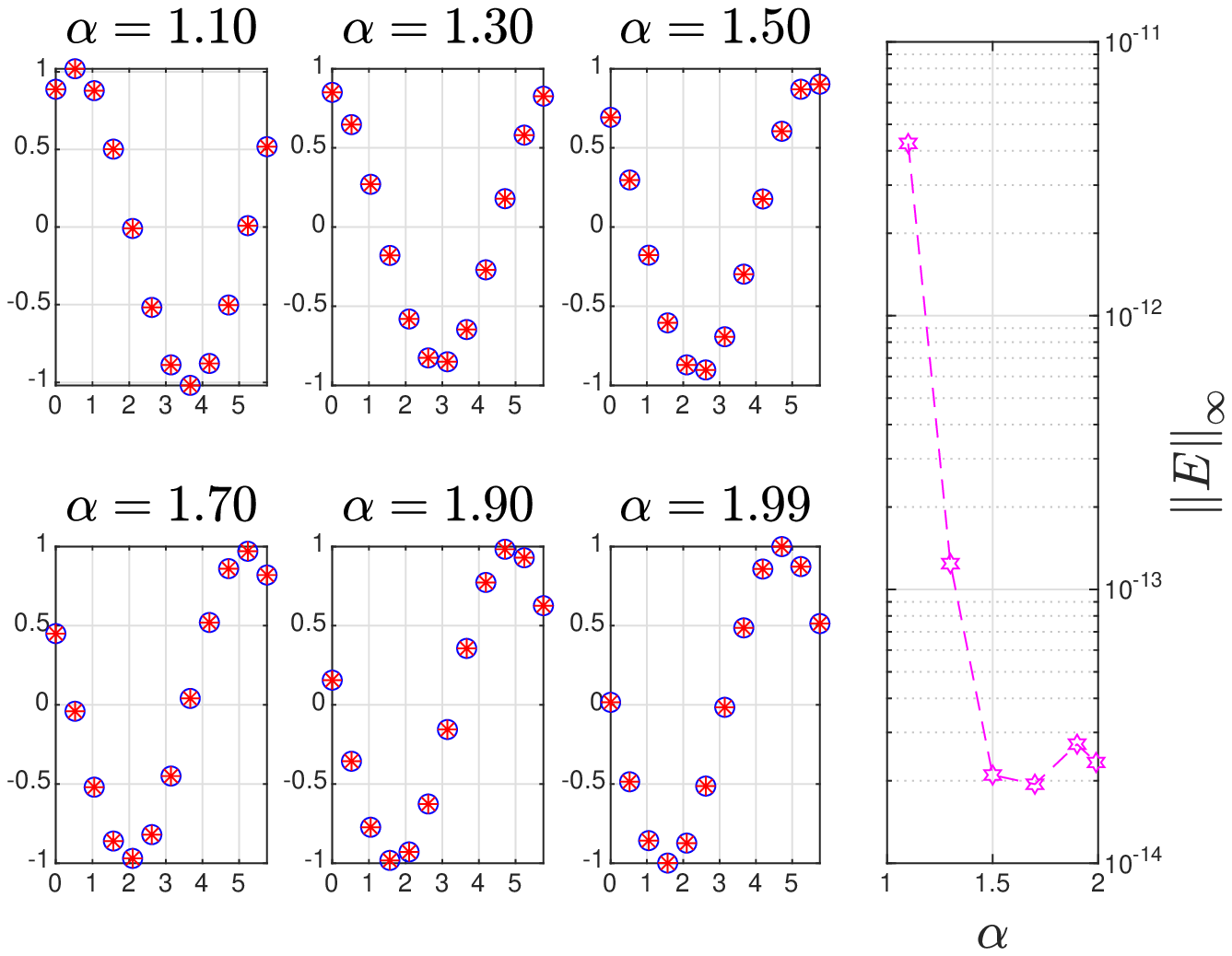}
\caption{The exact and FGPS approximate values of $\ED{L}{t}{\alpha}{\sin(t)}$ (Columns 1--3) and their corresponding maximum absolute errors (the 4th Column) for $\alpha = 1.1:0.2:1.9, 1.99$. The FGPS approximations were obtained using $N = 12, L = 30,  N_G = 1000$, and $\lambda = 0$. The exact values are shown in red color with * marker symbol, while the FGPS approximations are shown in blue colors with o marker symbol.}
\label{fig:2}
\end{figure}

\begin{figure}[t]
\centering
\includegraphics[scale=0.5]{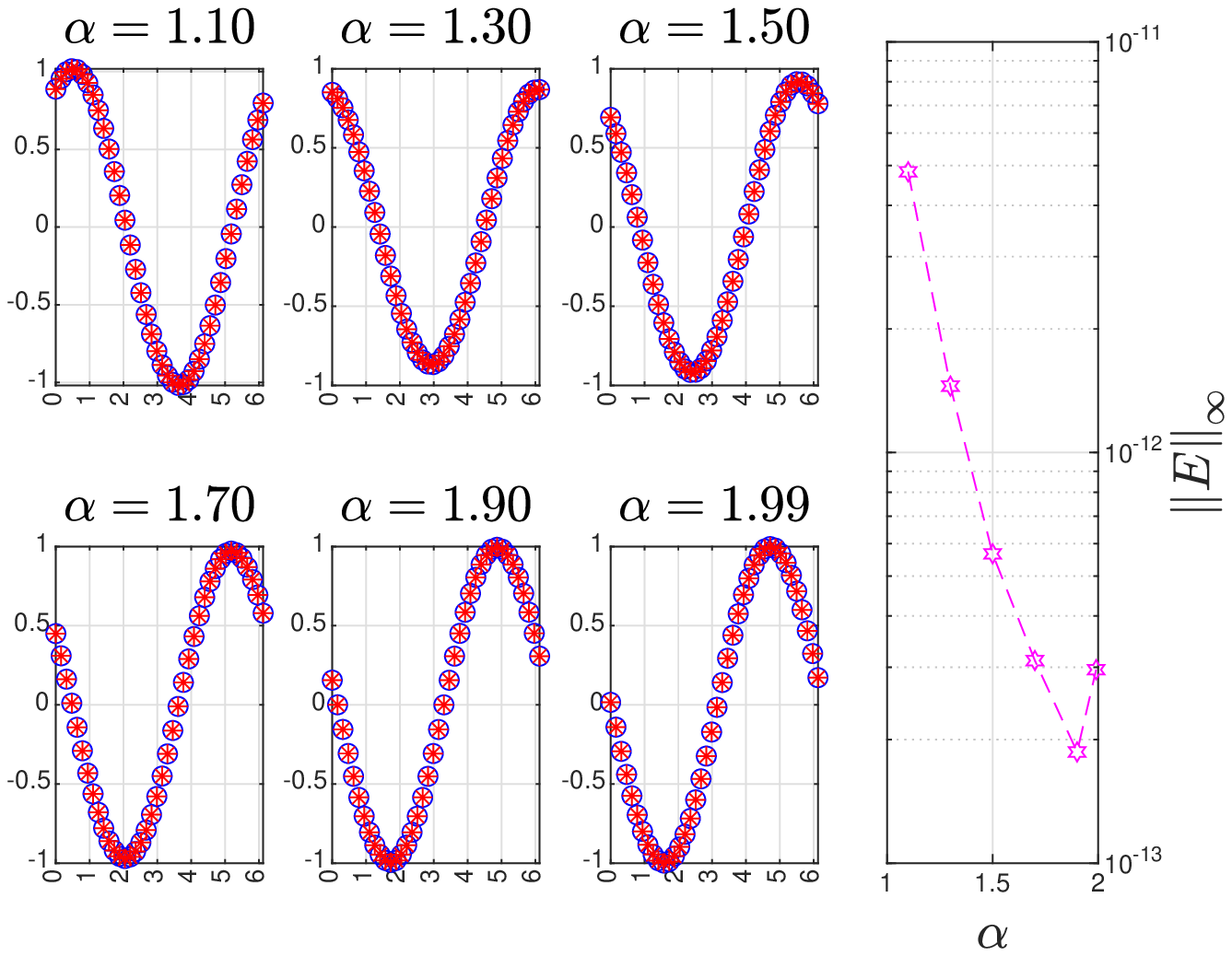}
\caption{The exact and FGPS approximate values of $\ED{L}{t}{\alpha}{\sin(t)}$ (Columns 1--3) and their corresponding maximum absolute errors (the 4th Column) for $\alpha = 1.1:0.2:1.9, 1.99$. The FGPS approximations were obtained using $N = 40, L = 30,  N_G = 1000$, and $\lambda = 0$. The exact values are shown in red color with * marker symbol, while the FGPS approximations are shown in blue colors with o marker symbol.}
\label{fig:3}
\end{figure}

\begin{figure}[t]
\centering
\includegraphics[scale=0.5]{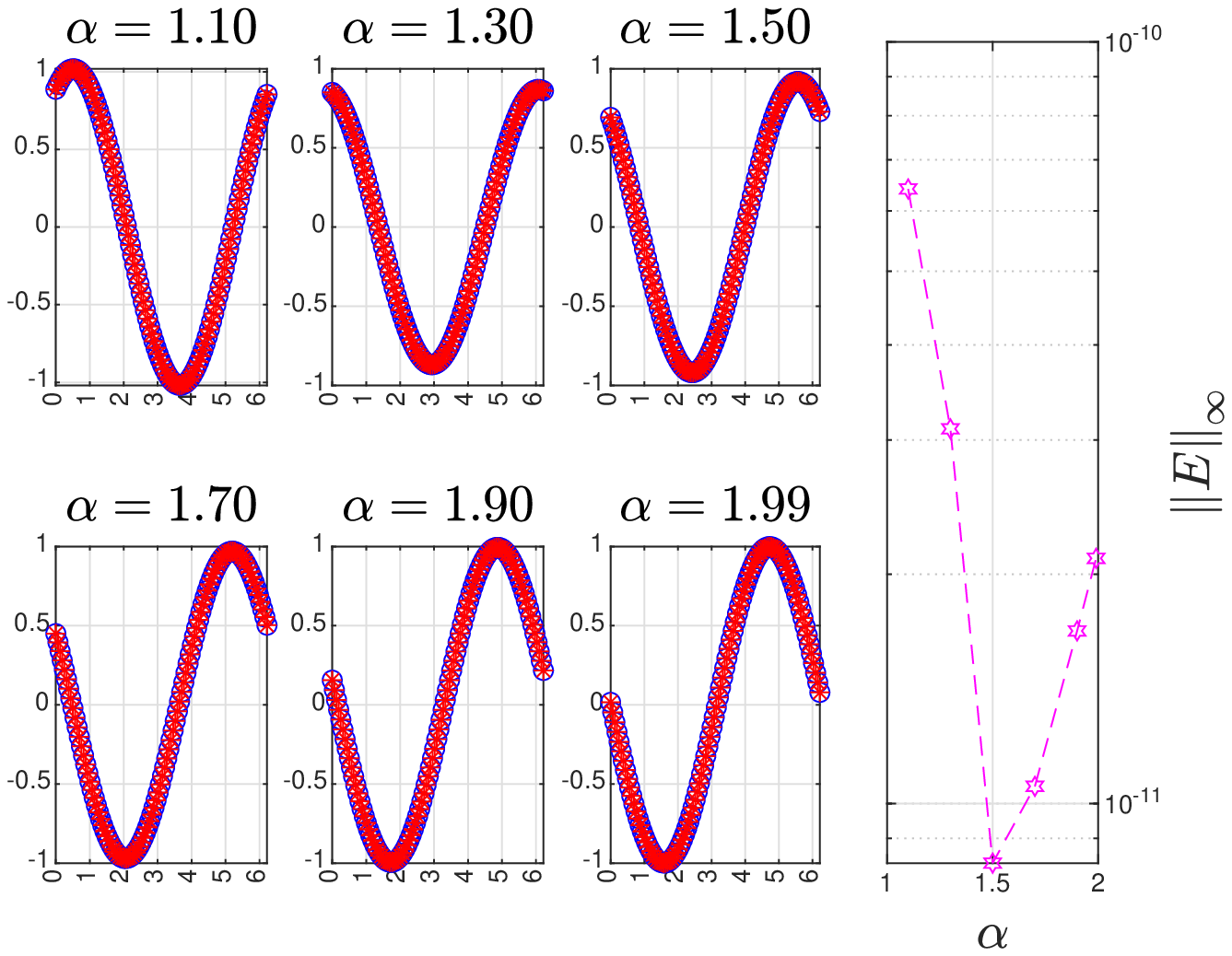}
\caption{The exact and FGPS approximate values of $\ED{L}{t}{\alpha}{\sin(t)}$ (Columns 1--3) and their corresponding maximum absolute errors (the 4th Column) for $\alpha = 1.1:0.2:1.9, 1.99$. The FGPS approximations were obtained using $N = 100, L = 30,  N_G = 1000$, and $\lambda = 0$. The exact values are shown in red color with * marker symbol, while the FGPS approximations are shown in blue colors with o marker symbol.}
\label{fig:4}
\end{figure}

\section{Error and Convergence Analysis}
\label{sec:ECA1}
The following theorem provides the general truncation error form of the FGPSQ \eqref{eq:Quaderr1}.
\begin{thm}\label{sec:erranalysgp1}
Suppose that ${\C{F}^{(m)}_j}\left(t-L\,y^{\frac{1}{m-\alpha}}\right)\,\forall j \in \MBJ_N$ is approximated by the SGG interpolant obtained through interpolation at the SGG points set $\MBSG_{1,N_G}^{\lambda}\,\foralls N_G \in \MBZP, t \in \FOmega_T, L \in \MBRP$; cf. \cite{Elgindy20161,Elgindy2023a}. Then $\exists\,\{\zeta_{0:N-1}\} \subset (0,1)$ such that the error in the FGPSQ \eqref{eq:Quaderr1}, denoted by $E_{N,N_G,j}^{\lambda,\alpha}$, is given by
\begin{equation}
E_{N,N_G,j}^{\lambda,\alpha}(\zeta_j; t) = \frac{\psi_{L,N_G,j}^{\alpha}(\zeta_j; t)}{{({N_G} + 1)!{\mkern 1mu} K_{{N_G} + 1}^{\left( {\lambda} \right)}}} \C{I}_1^{(y)} {\hat G_{{N_G} + 1}^{\left( {\lambda} \right)}},
\end{equation}
where
\begin{equation}\label{eq:Genius1}
\psi_{L,N_G,j}^{\alpha}(y;t) = \left(\frac{L}{\alpha-m} y^{-\frac{m-\alpha-1}{m-\alpha}}\right)^{N_G+1} \C{F}_j^{(N_G+m+1)}\left(t-L\,y^{\frac{1}{m-\alpha}}\right),
\end{equation}
and
\begin{equation}
K_{l}^{(\lambda )} = 2^{2l - 1} \frac{{\Gamma \left( {2\lambda  + 1} \right)\Gamma \left( {l + \lambda } \right)}}{{\Gamma \left( {\lambda  + 1} \right)\Gamma \left( {l + 2\lambda } \right)}}\quad \forall l \in \mathbb{Z}_0^+,
\end{equation}
is the leading coefficient of the $l$th-degree SGG polynomial with index $\lambda, {\hat G_l^{\left( {\lambda} \right)}}$.
\end{thm}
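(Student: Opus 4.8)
The plan is to recognize the FGPSQ in \eqref{eq:Quaderr1} as an \emph{interpolatory} shifted Gegenbauer (SG) quadrature and to express its error as the exact integral of the associated interpolation remainder. Write $h_j(y) = \C{F}_j^{(m)}\!\left(t - L\,y^{\frac{1}{m-\alpha}}\right)$ for the integrand appearing in \eqref{eq:Quaderr1} (with $t = t_{N,l}$ at the mesh points), and let $P_{N_G}$ denote the unique polynomial of degree at most $N_G$ that interpolates $h_j$ at the $N_G+1$ SGG nodes $\MBSG_{1,N_G}^{\lambda}$. By the defining property of the GG points-based integration row vector $\F{P}$, the quantity ${}_{L}^E\C{Q}^{\alpha}_{N_G,l,j}$ is precisely $\C{I}_1^{(y)}P_{N_G}$, so the quadrature error is $E_{N,N_G,j}^{\lambda,\alpha} = \C{I}_1^{(y)}\!\left(h_j - P_{N_G}\right)$, the integral over $[0,1]$ of the interpolation remainder.

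First I would invoke the classical Cauchy remainder for polynomial interpolation: since $h_j$ is smooth on $(0,1]$ as the composition of the trigonometric polynomial $\C{F}_j^{(m)}$ with $y\mapsto t-L\,y^{1/(m-\alpha)}$, there is for each $y$ a point $\xi(y)$ with $h_j(y)-P_{N_G}(y) = \frac{h_j^{(N_G+1)}(\xi(y))}{(N_G+1)!}\,\prod_{k=0}^{N_G}\!\left(y-\hz_{N_G,k}^{\lambda}\right)$. The node polynomial is monic with roots exactly at the SGG nodes, hence equals $\hat G_{N_G+1}^{(\lambda)}(y)/K_{N_G+1}^{(\lambda)}$, where $K_{N_G+1}^{(\lambda)}$ is the stated leading coefficient. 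Integrating over $[0,1]$ and extracting a single intermediate point $\zeta_j\in(0,1)$ then yields
\[
E_{N,N_G,j}^{\lambda,\alpha}(\zeta_j;t) = \frac{h_j^{(N_G+1)}(\zeta_j)}{(N_G+1)!\,K_{N_G+1}^{(\lambda)}}\,\C{I}_1^{(y)}\hat G_{N_G+1}^{(\lambda)}.
\]
It remains to identify $h_j^{(N_G+1)}$. Differentiating the composition while tracking the Jacobian $g'(y) = \frac{L}{\alpha-m}\,y^{-\frac{m-\alpha-1}{m-\alpha}}$, the principal contribution is $\left(g'(y)\right)^{N_G+1}\C{F}_j^{(N_G+m+1)}\!\left(t-L\,y^{1/(m-\alpha)}\right) = \psi_{L,N_G,j}^{\alpha}(y;t)$, which is exactly \eqref{eq:Genius1}; substituting $\psi$ at $\zeta_j$ for $h_j^{(N_G+1)}(\zeta_j)$ gives the claimed identity.

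The hard part will be the extraction of the single intermediate point $\zeta_j$. Because $\hat G_{N_G+1}^{(\lambda)}$ changes sign on $[0,1]$, the elementary weighted mean value theorem for integrals does not apply directly; instead I would argue through the continuity of $\psi_{L,N_G,j}^{\alpha}(\cdot;t)$ on the closed interval together with the intermediate value theorem, after first checking that $\C{I}_1^{(y)}\hat G_{N_G+1}^{(\lambda)}\neq 0$ so that the division is legitimate. A related technical point is the regularity at the left endpoint: since $m-\alpha\in(0,1)$, both $g'(y)$ and the full derivative $h_j^{(N_G+1)}$ may be singular as $y\to 0^{+}$, whereas the factor $\left(g'(y)\right)^{N_G+1}$ in $\psi$ carries a positive power of $y$ and therefore vanishes there, so $\psi_{L,N_G,j}^{\alpha}(\cdot;t)$ is bounded and continuous on $[0,1]$. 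I would handle this by working on $(0,1]$, verifying that the remainder is integrable near $0$, and confirming that $\psi$ is the dominant term of the composite derivative, in direct analogy with the $\alpha\in(0,1)$ analysis of \cite{elgindy2023fourier}; this alignment also reassures us that the lower-order composite-derivative contributions are subsumed into the single-point representation.
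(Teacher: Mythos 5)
Your route is structurally the same as the paper's: the paper's entire proof consists of (i) a chain-rule identification of $\psi_{L,N_G,j}^{\alpha}(y;t)$ as the $(N_G+1)$st $y$-derivative of the integrand $h_j(y)=\C{F}_j^{(m)}\bigl(t-L\,y^{\frac{1}{m-\alpha}}\bigr)$, and (ii) a direct appeal to the shifted Gegenbauer quadrature error formula of \cite[Theorem 4.1]{Elgindy20161}. What you do differently is to re-derive that cited formula from first principles --- interpolatory quadrature, Cauchy remainder, the monic node polynomial equal to $\hat G_{N_G+1}^{(\lambda)}/K_{N_G+1}^{(\lambda)}$, and a single-point mean-value extraction. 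That is precisely the content of the imported theorem, so the decomposition is identical; the extra work buys self-containedness rather than a new argument.

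Two of the difficulties you flag are genuine, and neither your sketch nor the paper closes them. First, the identification $h_j^{(N_G+1)}(\zeta_j)=\psi_{L,N_G,j}^{\alpha}(\zeta_j;t)$ is not the chain rule: by Fa\`a di Bruno, $h_j^{(N_G+1)}$ equals $\bigl(\tau'(y)\bigr)^{N_G+1}\C{F}_j^{(N_G+m+1)}(\tau)$ \emph{plus} a sum of terms involving the lower-order derivatives $\C{F}_j^{(m+k)}(\tau)$, $k\le N_G$, weighted by higher derivatives of the non-affine inner map $y\mapsto t-L\,y^{1/(m-\alpha)}$, and these extra terms do not vanish (some of them blow up as $y\to 0^{+}$). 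Calling $\psi$ the ``principal contribution'' and asserting that the rest is ``subsumed into the single-point representation'' is a claim, not a proof; the paper makes exactly the same move by writing the composite derivative as an equality in its displayed Chain Rule computation, so you have reproduced its proof, gap included. Second, your worry about the mean-value extraction is well placed but your proposed fix does not work: since $\hat G_{N_G+1}^{(\lambda)}$ changes sign on $(0,1)$, the weighted mean value theorem is unavailable, and the intermediate value theorem alone does not rescue the representation either, because $\C{I}_1^{(y)}\bigl[h_j^{(N_G+1)}(\xi(y))\,\hat G_{N_G+1}^{(\lambda)}\bigr]$ need not lie between the extreme values of $h_j^{(N_G+1)}$ multiplied by $\C{I}_1^{(y)}\hat G_{N_G+1}^{(\lambda)}$ when the weight is sign-changing. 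The paper outsources this step to \cite[Theorem 4.1]{Elgindy20161}; if you re-derive rather than cite, you must supply whatever argument that reference uses (or work with a sign-definite splitting of the node polynomial), and your sketch currently leaves this open.
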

\begin{proof}
By the change of variables \eqref{eq:Elg1}, we have $y = [(t-\tau)/L]^{m-\alpha}$. The Chain Rule therefore yields
\begin{align}
\frac{{{d^{{N_G} + 1}}y}}{{d{y^{{N_G} + 1}}}}\C{F}_j^{(m)}(\tau ) &= {\left[ {\frac{{{L^{m - \alpha }}}}{{\alpha  - m}}{{(t - \tau )}^{ - m + \alpha  + 1}}} \right]^{{N_G} + 1}}\C{F}_j^{({N_G} + m + 1)}(\tau )\\
&=  \left(\frac{L}{\alpha-m} y^{-\frac{m-\alpha-1}{m-\alpha}}\right)^{N_G+1} \C{F}_j^{(N_G+m+1)}\left(t-L\,y^{\frac{1}{m-\alpha}}\right).\label{eq:pr1}
\end{align}
The proof is established by combining Eq. \eqref{eq:pr1} with \cite[Theorem 4.1]{Elgindy20161}. 
\end{proof}
The following corollary gives a more compact form of the truncation error formula derived earlier in \cite[Theorem 5.1]{elgindy2023fourier} for $0 < \alpha < 1$.
\begin{cor}\label{sec:erranalysgp1}
Let $0 < \alpha < 1$ and the assumption of Theorem \ref{sec:erranalysgp1} holds true. Then $\exists\,\{\zeta_{0:N-1}\} \subset (0,1)$ such that 
\begin{equation}
E_{N,N_G,j}^{\lambda,\alpha}(\zeta_j; t) = \frac{\psi_{L,N_G,j}^{\alpha}(\zeta_j; t)}{{({N_G} + 1)!{\mkern 1mu} K_{{N_G} + 1}^{\left( {\lambda} \right)}}} \C{I}_1^{(y)} {\hat G_{{N_G} + 1}^{\left( {\lambda} \right)}},
\end{equation}
where
\begin{equation}\label{eq:Genius1}
\psi_{L,N_G,j}^{\alpha}(y;t) = \left(\frac{L}{\alpha-1} y^{\frac{\alpha}{1-\alpha}}\right)^{N_G+1} \C{F}_j^{(N_G+2)}\left(t-L\,y^{\frac{1}{1-\alpha}}\right).
\end{equation}
\end{cor}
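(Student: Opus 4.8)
The plan is to obtain this corollary as the immediate specialization of Theorem~\ref{sec:erranalysgp1} to the sub-unit regime $0 < \alpha < 1$. First I would observe that $0 < \alpha < 1$ forces the ceiling value $m = \lceil \alpha \rceil = 1$, so the hypotheses of Theorem~\ref{sec:erranalysgp1} hold verbatim and its conclusion applies directly. In particular, the error identity, the normalizing factor $(N_G+1)!\,K_{N_G+1}^{(\lambda)}$, the integral factor $\C{I}_1^{(y)}\hat G_{N_G+1}^{(\lambda)}$, and the existence of the points $\{\zeta_{0:N-1}\} \subset (0,1)$ are all inherited without change. The only remaining content is to verify that the function $\psi_{L,N_G,j}^{\alpha}$ collapses to the claimed compact form when $m = 1$.

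The remaining step is a purely algebraic reduction of the $m$-dependent quantities in the general $\psi_{L,N_G,j}^{\alpha}$ supplied by Theorem~\ref{sec:erranalysgp1}. Substituting $m = 1$ gives $L/(\alpha - m) = L/(\alpha - 1)$ and $\C{F}_j^{(N_G+m+1)} = \C{F}_j^{(N_G+2)}$, while the argument $t - L\,y^{1/(m-\alpha)}$ becomes $t - L\,y^{1/(1-\alpha)}$. The sole point requiring genuine care is the rational exponent of $y$: evaluating $-\frac{m-\alpha-1}{m-\alpha}$ at $m = 1$ produces $-\frac{1-\alpha-1}{1-\alpha} = -\frac{-\alpha}{1-\alpha} = \frac{\alpha}{1-\alpha}$, where the double-sign cancellation must be tracked carefully so that the minus sign is not dropped. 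Assembling these substitutions reproduces the stated identity $\psi_{L,N_G,j}^{\alpha}(y;t) = \left(\frac{L}{\alpha-1}\,y^{\frac{\alpha}{1-\alpha}}\right)^{N_G+1}\C{F}_j^{(N_G+2)}\!\left(t-L\,y^{\frac{1}{1-\alpha}}\right)$, which finishes the argument.

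I should be candid that there is no substantial obstacle here: the entire proof is the bookkeeping of setting $m=1$ in the general error formula, and the only step demanding attention is the simplification of the rational exponent, which is elementary. The value of the corollary is expository rather than technical, since it shows that the compact truncation-error formula originally obtained in \cite[Theorem 5.1]{elgindy2023fourier} for $0 < \alpha < 1$ is recovered as a special case of the more general Theorem~\ref{sec:erranalysgp1} valid for arbitrary $\alpha \in \MBRP\backslash\MBZP$.
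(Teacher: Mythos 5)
Your proposal is correct and coincides with the paper's treatment: the paper states this corollary without a written proof precisely because it is the immediate specialization of Theorem~\ref{sec:erranalysgp1} to $m=\lceil\alpha\rceil=1$, and your algebraic check that $-\frac{m-\alpha-1}{m-\alpha}\big|_{m=1}=\frac{\alpha}{1-\alpha}$ is exactly the only nontrivial bookkeeping involved.
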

The following theorem underlines the truncation error bound of the FGPSQ \eqref{eq:Quaderr1}.
\begin{thm}\label{thm:Jan212022}
Suppose that the assumption of Theorem \ref{sec:erranalysgp1} holds true. Then there exist some constants $D^{\lambda} > 0$ and $B_1^{\lambda} > 1$, which depend only on $\lambda$, such that $\forall j \in \MBJ_N$,\\ 
\scalebox{0.9}{\parbox{\linewidth}{%
\begin{align}
	&{\left| E_{N,N_G,j}^{\lambda,\alpha}(\zeta_j; t)\right|} \le  
	D^{\lambda} N^{N_G+m} \left(\frac{L}{m-\alpha} \zeta_j^{-\frac{m-\alpha-1}{m-\alpha}}\right)^{N_G+1}\,{2^{ - 2{N_G} - 1}}{{{e}}^{{N_G}}}{N_G}^{\lambda - {N_G} - \frac{3}{2}} \times \nonumber\\
	&{\left\{ \begin{array}{l}
	1,\quad {N_G} \ge 0 \wedge \lambda \ge 0,\\
	\displaystyle{\frac{{\Gamma \left( {\frac{{{N_G}}}{2} + 1} \right)\Gamma \left( {\lambda + \frac{1}{2}} \right)}}{{\sqrt \pi\,\Gamma \left( {\frac{{{N_G}}}{2} + \lambda + 1} \right)}}},\quad N_G \in \MBZOP \wedge  - \frac{1}{2} < \lambda < 0,\\
	\displaystyle{\frac{{2\Gamma \left( {\frac{{{N_G} + 3}}{2}} \right)\Gamma \left( {\lambda + \frac{1}{2}} \right)}}{{\sqrt \pi  \sqrt {\left( {{N_G} + 1} \right)\left( {{N_G} + 2\lambda + 1} \right)}\,\Gamma \left( {\frac{{{N_G} + 1}}{2} + \lambda} \right)}}},\quad N_G \in \MBZzereP \wedge  - \frac{1}{2} < \lambda < 0,\\
	B_1^{\lambda} {\left( {{N_G} + 1} \right)^{ - \lambda}},\quad {N_G} \to \infty  \wedge  - \frac{1}{2} < \lambda < 0.
	\end{array} \right.}\label{eq:wow1}
\end{align}
}}\\\vspace{-4mm}
\end{thm}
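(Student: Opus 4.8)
The plan is to begin from the exact error representation supplied by Theorem~\ref{sec:erranalysgp1}, pass to absolute values, and estimate the three naturally decoupled factors $\bigl|\psi_{L,N_G,j}^{\alpha}(\zeta_j;t)\bigr|$, $1/\bigl[(N_G+1)!\,K_{N_G+1}^{(\lambda)}\bigr]$, and $\bigl|\C{I}_1^{(y)}\hat G_{N_G+1}^{(\lambda)}\bigr|$ in turn, since these isolate respectively the $N$-growth, the factorial/exponential decay, and the $\lambda$- and parity-dependent behaviour that appear in \eqref{eq:wow1}. Because $m=\lceil\alpha\rceil$ with $\alpha\in\MBRP\backslash\MBZP$ forces $0<m-\alpha<1$, we have $|\alpha-m|=m-\alpha$, so the power prefactor of $\psi$ in \eqref{eq:Genius1} contributes exactly $\bigl(\tfrac{L}{m-\alpha}\,\zeta_j^{-(m-\alpha-1)/(m-\alpha)}\bigr)^{N_G+1}$, requiring no further estimation.

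For the remaining factor $\C{F}_j^{(N_G+m+1)}$ inside $\psi$, I would insert the closed form \eqref{eq:CFL2nth1} with $m$ replaced by $N_G+m+1$, bound each sine by $1$, use $|k|\le N/2$ across the $O(N)$ surviving frequencies, and cancel the $1/N$ prefactor; this produces the power of $N$ displayed as $N^{N_G+m}$ in \eqref{eq:wow1}, times a frequency-scaling constant that is uniform in $t$ and $j$. For the middle factor I would substitute the explicit leading coefficient $K_{N_G+1}^{(\lambda)}$ and apply Stirling's formula to $(N_G+1)!=\Gamma(N_G+2)$: the factorial yields $e^{N_G}N_G^{-N_G-3/2}$, the factor $2^{2N_G+1}$ in $K_{N_G+1}^{(\lambda)}$ yields $2^{-2N_G-1}$, and the Gamma quotient $\Gamma(N_G+1+2\lambda)/\Gamma(N_G+1+\lambda)\sim N_G^{\lambda}$ yields the $N_G^{\lambda}$ term, while the purely $\lambda$-dependent constants $\Gamma(\lambda+1)/\Gamma(2\lambda+1)$ together with the Stirling prefactor are absorbed into $D^{\lambda}$.

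The crux is the last factor $\bigl|\C{I}_1^{(y)}\hat G_{N_G+1}^{(\lambda)}\bigr|$, the integral over $[0,1]$ of the degree-$(N_G+1)$ SGG polynomial, which is exactly what generates the four-branch expression. Here I would invoke the sharp integral estimates for shifted Gegenbauer polynomials from \cite{Elgindy20161,Elgindy2023a}: for $\lambda\ge 0$ the relevant quantity is uniformly controlled, giving the constant branch $1$; for $-\tfrac12<\lambda<0$ the estimate splits according to the parity of $N_G$, producing the two finite-$N_G$ Gamma-ratio branches, and its large-$N_G$ asymptotic produces the $B_1^{\lambda}(N_G+1)^{-\lambda}$ branch. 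Multiplying the three estimates and collecting every purely $\lambda$-dependent multiplicative constant into $D^{\lambda}$ (and the asymptotic constant into $B_1^{\lambda}$) delivers \eqref{eq:wow1}. I expect this final factor to be the principal obstacle: securing the correct case-dependent Gamma ratios in the regime $-\tfrac12<\lambda<0$, where the Gegenbauer polynomials are not governed by their endpoint values and their parity forces the even/odd dichotomy, and then matching these against the Stirling asymptotics so that precisely the advertised powers of $N_G$ emerge with no residual growth.
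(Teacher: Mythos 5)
Your decomposition is the same as the paper's: the paper bounds $\left|\psi_{L,N_G,j}^{\alpha}(\zeta_j;t)\right|$ by $c\,N^{N_G+m}\bigl(\tfrac{L}{m-\alpha}\zeta_j^{-(m-\alpha-1)/(m-\alpha)}\bigr)^{N_G+1}$ using the growth estimate $\C{F}_j^{(n+1)}(t)=O\left(N^n\right)$ quoted from \cite[Theorem 5.2]{elgindy2023fourier}, and then delegates everything else --- the Stirling asymptotics of $(N_G+1)!\,K_{N_G+1}^{(\lambda)}$ and the parity- and $\lambda$-dependent four-branch estimate coming from $\C{I}_1^{(y)}\hat G_{N_G+1}^{(\lambda)}$ --- to \cite[Theorem 5.5]{Elgindy2023a}. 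Your plan reproduces exactly this split, merely unpacking the two citations.

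The one place where your version does not close is the power of $N$. Bounding \eqref{eq:CFL2nth1} term by term as you describe --- $|k|^{N_G+m+1}\le(N/2)^{N_G+m+1}$ over $O(N)$ surviving frequencies, cancelled against the $1/N$ prefactor --- yields $\bigl|\C{F}_j^{(N_G+m+1)}(t)\bigr|\le(\pi N/T)^{N_G+m+1}$, i.e.\ $O\left(N^{N_G+m+1}\right)$, one power of $N$ more than the $N^{N_G+m}$ advertised in \eqref{eq:wow1}. The paper obtains the lower exponent only because the cited estimate asserts that the $(n+1)$st derivative is $O\left(N^n\right)$; your elementary triangle-inequality argument does not reproduce that gain, and the Bernstein-type bound $\|\C{F}_j^{(s)}\|_{\infty}=O\left(N^s\right)$ indicates the gain cannot come from crude term-wise estimates. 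So you must either import \cite[Theorem 5.2]{elgindy2023fourier} as the paper does, or accept the weaker exponent $N^{N_G+m+1}$. Everything else --- the observation that $0<m-\alpha<1$ makes the power prefactor of $\psi$ exact, the Stirling treatment of the factorial and of $K_{N_G+1}^{(\lambda)}$, and the appeal to the shifted-Gegenbauer integral estimates for the four branches --- matches the paper's (entirely citation-based) argument.
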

\begin{proof}
Since 
\begin{equation}\label{eq:LFOrder1}
\C{F}_j^{(n+1)}(t) = O\left(N^n\right),\quad\text{ as }N \to \infty\,\forall (t,j,n) \in \FOmega_T \times \MBJ_N \times \MBZP;
\end{equation}
cf. \cite[Theorem 5.2]{elgindy2023fourier}, then 
\begin{equation}\label{eq:soly1}
\left|\psi_{L,N_G,j}^{\alpha}(\zeta_j; t)\right| \le c N^{N_G+m} \left(\frac{L}{m-\alpha} \zeta_j^{-\frac{m-\alpha-1}{m-\alpha}}\right)^{N_G+1}\quad \foralls c \in \MBRP.
\end{equation}
The rest of the proof follows by \cite[Theorem 5.5]{Elgindy2023a}.
\end{proof}
Theorem \ref{thm:Jan212022} proves the exponential convergence of the FGPS expansion for sufficiently smooth periodic functions as $N_G \to \infty$. We can also generally infer from the theorem that while holding the other parameters fixed, the larger the memory length, $L$, or the degree of Fourier interpolant, $I_Nf$, or the fractional order ceiling value, $m$, the larger the truncation error bound, and the larger the expected FD error.

\section{Numerical Simulations}
\label{sec:NS1}
Let $\bmx = [y_{1:2}]^{\top} \in {}_{\pi}\MFX_{2}^m,  \bmu = u \in {}_{\pi}\MFU_1$, and consider Problem \eqref{eq:OC1} with ${g}(\bmx(t), \bmu(t), t) = u^2-y_1^2, \bmf\left(\bmx(t),\bmu(t),t\right) = [y_2, -4 y_1 - 0.3 y_2 + u]^{\top}, -5 \le y_i \le 5\,\forall i \in \MBN_2$, and $-1 \le u(t) \le 1$. For $\alpha = 1$, this problem is a proper periodic OC problem in the sense that an optimal periodic admissible pair $(\bmx, \bmu)$ gives a strictly lower optimal performance index than that obtained by an optimal constant admissible pair; cf. \cite{gaitsgory2006linear,azzato2008applying}. \citet{gaitsgory2006linear} solved this problem earlier by solving an approximate finite-dimensional linear programming problem instead, and they estimated that if $J(\bmu) \approx -1.327\,\foralls$ admissible $T$-periodic pair $(\bmx, \bmu)$, then this pair is an approximate optimal solution. We solved this problem using the FGPS method performed using MATLAB R2023a software installed on a personal laptop equipped with a 2.9 GHz AMD Ryzen 7 4800H CPU and 16 GB memory running on a 64-bit Windows 11 operating system. We used MATLAB fmincon solver to solve the reduced nonlinear programming problem (NLP) with initial guesses of all ones and terminated the numerical optimization procedure whenever
\[\left\| {{\bmX^{(k + 1)}} - {\bmX^{(k)}}} \right\|_2 < {10^{ - 15}}\quad \text{or}\quad \left\|J_N^{(k+1)} - J_N^{(k)} \right\|_2 < {10^{ - 15}},\]
where $\bmX^{(k)} = [\bmx^{(k)}; \bmu^{(k)}]$ and $J_N^{(k)}$ denote the concatenated vector of approximate NLP minimizers and optimal cost function value at the $k$th iteration, respectively. The quality of the approximations were measured using the absolute discrete feasibility error (ADFE) at the collocation points; cf. \cite{Elgindy2023b,elgindy2023fourier}, denoted by $\bm{\C{E}}_N = (\C{E}_i)_{0 \le i \le N n_x-1}$. Figure \ref{fig:Fig5} shows the approximate optimal states and control variables obtained by the proposed method for $\alpha = 0.99999$ and $L = 30$. The approximate optimal performance index value was $J_{100} \approx -1.311$, rounded to three decimal digits with negligible ADFEs approaching the machine epsilon. Since the accuracy established by the FGPS approximations to approximate $\ED{L}{t}{\alpha}{\sin(t)}$ was roughly of $O\left(10^{-10}\right)$ when $N = 100$, as shown by Figure \ref{fig:4}, we expect the calculated value $J_{100}$ to be accurate to at least the shown number of figures. The figure also clearly shows that the OC variable is a bang-bang control with a transition as observed earlier in \cite{azzato2008applying}. Figure \ref{fig:Fig8} shows the convergence of $J_N$ to the computed value $-1.311$ for increasing values of $N$. It is interesting to mention here that when the fractional order $\alpha \to 1$ from the right, the discrete NLP becomes a steady state optimization problem associated with the approximate optimal performance index $J_{100} \approx 0$ and the approximate optimal states and control variables $(\bmx,\bmu) = \F{O}_{1,3}$, as manifested by the FGPS method through Figure \ref{fig:Fig6}. Figure \ref{fig:Fig7} shows the evolution of the approximate optimal state and control variables with the fractional order $\alpha$ for some increasing values of $\alpha$ approaching $1$.

\begin{figure}
\centering
\includegraphics[scale=0.24]{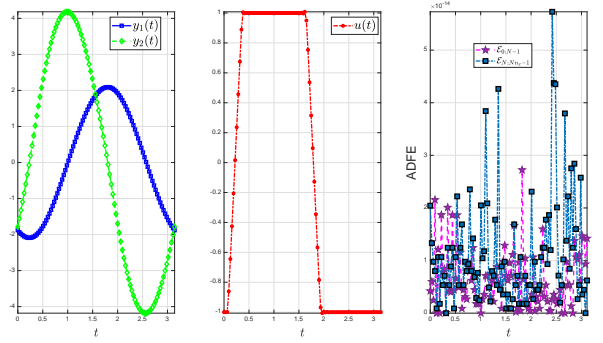}
\caption{The profiles of the approximate optimal state and control variables and the ADFEs obtained at the collocation nodes set $\MBS_{100}^{\pi}$ using the FGPS method together with MATLAB fmincon solver with the parameter values $N = 100, \alpha = 0.99999$, and $L = 30$. The plots of the state and control variables were generated using $100$ linearly spaced nodes in $\FOmega_{\pi}$.}
\label{fig:Fig5}
\end{figure}

\begin{figure}
\centering
\includegraphics[scale=0.5]{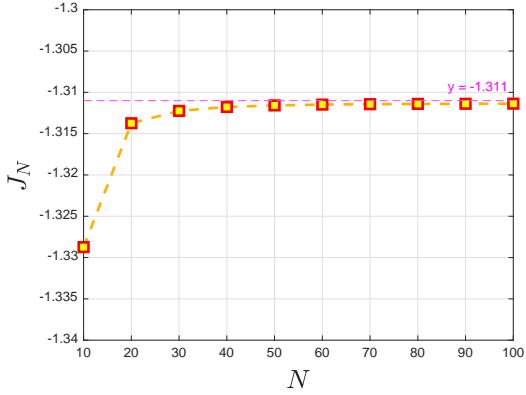}
\caption{The approximate optimal performance index values against $N$ for $N = 10$:$10$:$100, \alpha = 0.99999$, and $L = 30$.}
\label{fig:Fig8}
\end{figure}

\begin{figure}
\centering
\includegraphics[scale=0.24]{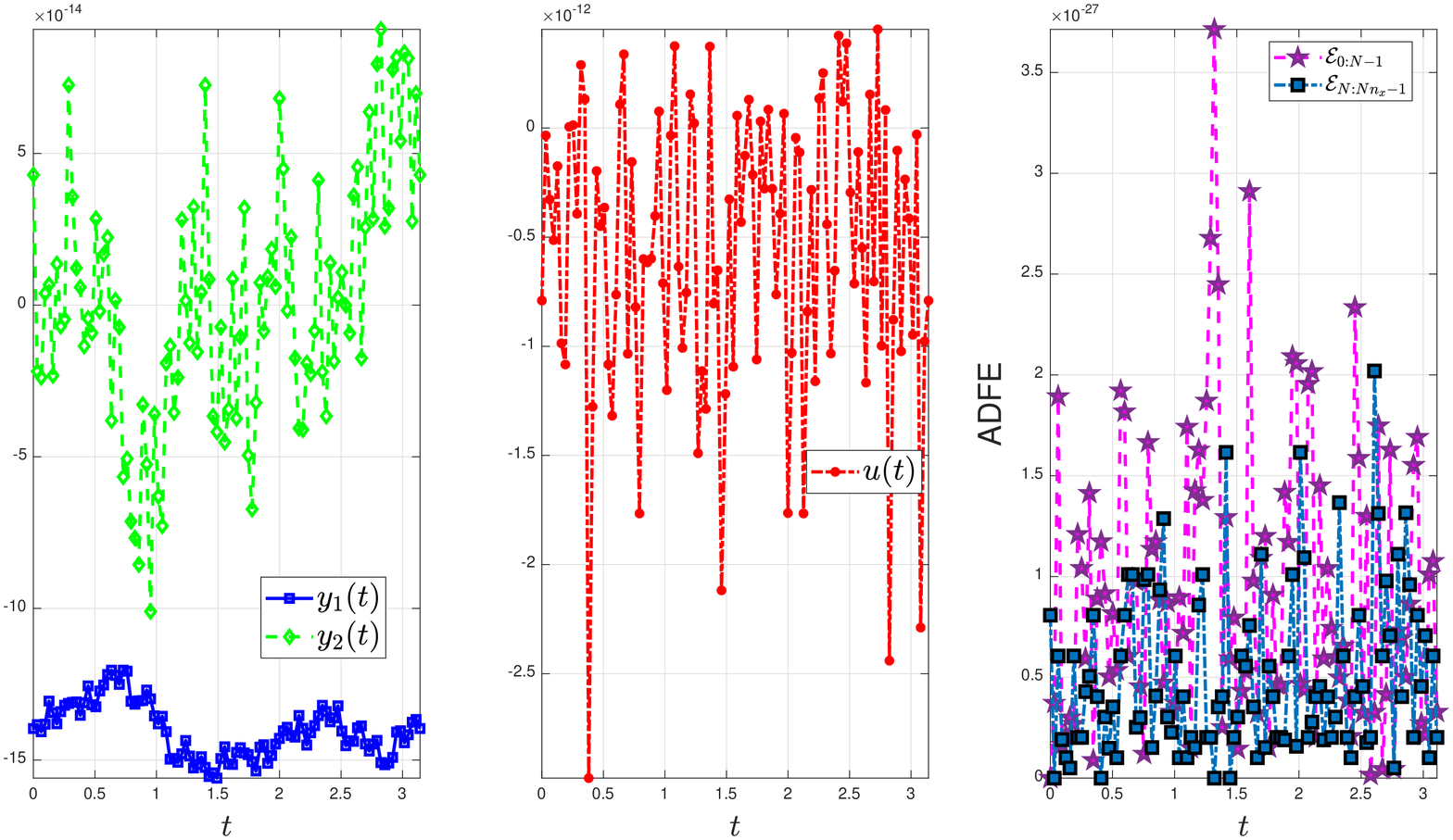}
\caption{The profiles of the approximate optimal state and control variables and the ADFEs obtained at the collocation nodes set $\MBS_{100}^{\pi}$ using the FGPS method together with MATLAB fmincon solver with the parameter values $N = 100, \alpha = 1.00001$, and $L = 30$. The plots of the state and control variables were generated using $100$ linearly spaced nodes in $\FOmega_{\pi}$.}
\label{fig:Fig6}
\end{figure}

\begin{figure}
\centering
\includegraphics[scale=0.24]{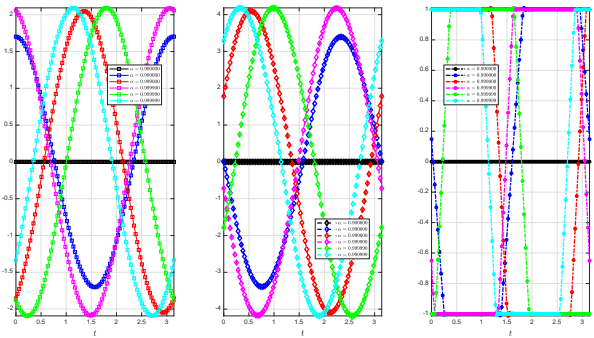}
\caption{The evolution of the approximate optimal state and control variables $y_1$ (left), $y_2$ (middle), and $u$ (right) for $\alpha = 0.9, 0.99, 0.999, 0.9999, 0.99999, 0.999999$ obtained using the FGPS-fmincon method with the parameter values $N = 100$ and $L = 30$. The plots were generated using $100$ linearly spaced nodes in $\FOmega_{\pi}$.}
\label{fig:Fig7}
\end{figure}

\section{Conclusion}
\label{sec:Conc}
This study introduced an extension of the earlier work we presented in \cite{elgindy2023fourier} to PHFOCPs with fractional order $\alpha \in \MBRP\backslash\MBZP$. The smart change of variables formula \eqref{eq:Elg1} largely simplified the problem of calculating the periodic FDs of periodic functions to the problem of evaluating the integral of the $m$th-derivatives of their trigonometric Lagrange interpolating polynomials. This strategy allows for the accurate computation of the singular integral of the FD formula by removing the singularity at the integration lower limit prior to numerical integration and renders the reduced integral well behaved and can be treated accurately and efficiently using Gegenbauer quadratures. We extended the notions of the FGPSQ and the FGPSFIM with index $L$, which proved to be very effective in computing periodic FDs, to exist for any fractional order $\alpha \in \MBRP\backslash\MBZP$. The derived Theorems \ref{sec:erranalysgp1} and \ref{thm:Jan212022} are very useful as they proved the exponential convergence of the FGPS expansion for sufficiently smooth periodic functions as $N_G \to \infty$, and allowed for the prediction of the quality of the FGPS approximations to FDs in light of some of the main parameters associated with the periodic FD and the proposed FGPS method. The numerical results of the benchmark PHFOCP demonstrated the accuracy, efficiency, and stability of the proposed FGPS method.

\section*{Declarations}
\subsection*{Competing Interests}
The author declares there is no conflict of interests.

\subsection*{Availability of Supporting Data}
The author declares that the data supporting the findings of this study are available within the article.

\subsection*{Ethical Approval and Consent to Participate and Publish}
Not Applicable.

\subsection*{Human and Animal Ethics}
Not Applicable.

\subsection*{Consent for Publication}
Not Applicable.

\subsection*{Funding}
The author received no financial support for the research, authorship, and/or publication of this article.

\subsection*{Authors' Contributions}
The author confirms sole responsibility for the following: study conception and design, data collection, analysis and interpretation of results, and manuscript preparation.

\bibliographystyle{model1-num-names}
\bibliography{Bib}
\end{document}